\numberwithin{equation}{section}
\newtheorem{thm}[equation]{Theorem}
\newtheorem{defn}{Definition}
\newtheorem{rem}{Remark}
\newcommand{\R}{\mathbb{R}}
\newcommand{\ent}[2]{\mathop{\mathrm{Ent}_{#1}}\left(#2\right)}
\renewcommand{\epsilon}{\varepsilon}
\def\av_#1{\mathchoice%
        {\mathop{\kern 0.2em\vrule width 0.6em height 0.69678ex depth -0.58065ex
                \kern -0.7em \intop}\nolimits_{\kern -0.4em#1}}%
        {\mathop{\kern 0.1em\vrule width 0.5em height 0.69678ex depth -0.60387ex
                \kern -0.6em \intop}\nolimits_{#1}}%
        {\mathop{\kern 0.1em\vrule width 0.5em height 0.69678ex depth -0.60387ex
                \kern -0.6em \intop}\nolimits_{#1}}%
        {\mathop{\kern 0.1em\vrule width 0.5em height 0.69678ex depth -0.60387ex
                \kern -0.6em \intop}\nolimits_{#1}}}
\title{Functional inequalities and Hamilton-Jacobi Equations in Geodesic Spaces\thanks{{\bf 2000 Mathematics
Subject Classification}: Primary 70H20, 49L99; Secondary 36C05, 47D06
 \hfill \break {\it Keywords\,}: Logarithmic-Sobolev inequalites, Talagrand inequalites,
Hamilton-Jacobi semigroup, Poincar\'e inequalities, geodesic metric space, metric-measure space.}}
\author{Zolt\'an M. Balogh \thanks{Corresponding author; Z. M. B. supported by the Swiss
Nationalfond, EC Project GALA: "Sub-Riemannian geometric analysis in Lie groups", and ERC Project HCAA, "Harmonic and complex analysis and applications"} \\ Institute of Mathematics \\ University of Bern \\Sidlerstrasse 5 \\ 3012 Bern, Switzerland \\ {\tt zoltan.balogh@math.unibe.ch} 
\and Alexandre Engoulatov \thanks{A. E.  supported by the EC Project GALA:"Sub-Riemannian geometric analysis in Lie groups"} \\ Institute of Mathematics \\ University of Bern \\
Sidlerstrasse 5 \\ 3012 Bern, Switzerland \\ {\tt alexander.engulatov@math.unibe.ch} 
\and Lars Hunziker \\ Department of Mathematics \\ University of Technology, Sydney \\
PO Box 123 Broadway \\ NSW 2007 Australia \\ {\tt lars.hunziker@uts.edu.au} 
 \and Outi Elina Maasalo \thanks{O. E. M.   supported by the EC Project GALA:"Sub-Riemannian geometric analysis in Lie groups"} Institute of Mathematics \\ University of Bern \\
Sidlerstrasse 5 \\ 3012 Bern, Switzerland \\ {\tt maasalo@math.unibe.ch}}
\date{\today}
\begin{document}
\maketitle
\begin{abstract}
We study the connection between the $p$--Talagrand inequality and the $q$--logarithmic Sololev inequality for conjugate exponents $p\geq 2$, $q\leq 2$ in proper geodesic metric spaces. By means of a general Hamilton--Jacobi semigroup we prove that these are equivalent, and moreover equivalent to the hypercontractivity of the Hamilton--Jacobi semigroup. Our results generalize those of Lott and Villani. They can be applied  to deduce the $p$-Talagrand inequality  in the sub-Riemannian setting of the Heisenberg group. 
\end{abstract}

\section{Introduction}

\noindent The main purpose of the present paper is to study
relations between functional inequalities on proper geodesic
metric measure spaces. More precisely, we prove that under some
additional assumption on the space, the $q$--logarithmic Sobolev
inequality and the  $p$--Talagrand inequality are equivalent for
the conjugate exponents $p\ge 2$ and $q\le 2$. This generalizes
the recent results of Lott and Villani, who considered similar
questions in the quadratic case when $p=q=2$; see \cite{LoVi2}. As
in \cite{LoVi2}, the Hamilton--Jacobi infimum convolution operator
plays a crucial role in our approach. This idea  goes back to the
work of Bobkov et al., \cite{BobGenLed}. They proved that in
Euclidean spaces a measure $\mu$ which is absolutely continuous
with respect to the Lebesgue measure satisfies the classical
logarithmic Sobolev inequality if and only if the Hamilton--Jacobi
semigroup associated to the quadratic infimum--convolution
operator is hypercontractive. Gentil and Malrieu generalized this to a broader class of logarithmic Sobolev inequalities; see \cite{GeMa}.

Lott and Villani applied the same strategy on a compact length
space $(X,d)$ equipped with a Borel probability measure $\mu$ to
prove the following. If the space supports a local Poincar\'e
inequality and the measure is doubling, then the quadratic
logarithmic Sobolev inequality implies the quadratic Talagrand
inequality with the same constant. In both proofs,
\cite{BobGenLed} and \cite{LoVi2}, it is crucial that the
infimum--convolution semigroup solves the Hamilton--Jacobi
equation associated to a radial Hamiltonian.

On the other hand, starting with a Talagrand inequality it is
possible to derive a logarithmic Sobolev inequality as a
consequence of the so called HWI inequality, which relates entropy
(H), Wasserstein distance (W) and Fisher information (I). However,
this requires an additional geometric assumption on the space. For
example, in the Riemannian setting it is sufficient to assume that
the reference measure $\mu$ satisfies the
Bakry--Emery~\cite{BakEm} curvature-dimension inequality
$CD(R,\infty)$ with the constant $R>-K$; see ~\cite{BobGenLed}. In the
more general setting of metric measure spaces we show that this is
guaranteed by the assumption that the entropy functional on the
Wasserstein space is weakly displacement convex. The notion of
weak displacement convexity is defined in the work of Lott and
Villani~\cite{LoVi}. See also \cite{St1} and \cite{St2} for questions related to the Ricci curvature in metric measure spaces.

To summarize our results we denote the  $q$--logarithmic Sobolev
inequality by $q{\text-LSI}$. We also introduce a notion of a
$p$--Talagrand inequality, $p{\text -T}$, where $p\ge 2$ and $q\le
2$ are conjugates so that $1/p+1/q=1$. We prove that
\begin{equation}
\xymatrix@1{
{\;\textrm{HC(}p\textrm{)}  \;}
\ar@{<=>}[rr]^-{\textrm{H-J}} &&
{\;\textit{q-LSI}\;}
\ar@/^/ @{=>}[rr]^-{\textrm{H-J}}  && { \; \textit{p-T}} \ar@/^/ @{=>}[ll]^-{\textrm{DConv}
}\;}.
\end{equation}
The left--hand side of the diagram represents the hypercontractivity
of the infimum--convolution semigroup associated to the exponent
$p$, {H-J} means that the implication is obtained via validity of
the Hamilton--Jacobi equation, and {DConv} stands for the weak
displacement convexity of the entropy functional.

The paper is organised as follows. In Section~\ref{defs} we list
some of the important properties of the infimum--convolution
semigroup. In Section~\ref{s-hc} we establish the equivalence on
the left-hand side of the above diagram, provided that the
Hamilton--Jacobi equation is satisfied. (It is the case e.g. when
the measure $\mu$ is doubling and supports a local Poincar\'e
inequality.)
In Section~\ref{s-t} we consider the relation between the
$q$--logarithmic Sobolev inequality and the $p$--Talagrand
inequality. Again assuming that the Hamilton--Jacobi equation is
satisfied on $X$, we show that
the $q$--logarithmic Sobolev inequality implies the $p$--Talagrand
inequality. The converse implication holds under the assumption of
the weak displacement convexity of the entropy functional on the
Wasserstein space of probability measures on $X$. For the reader's
convenience Section~\ref{proofs} provides an account of the
infimum-convolution semigroup on proper length spaces. The final section 
is for remarks and further questions. We also indicate here 
an application of our results by using a recent result of Inglis and Papageorgiou 
\cite{InPa} on the logarithmic Sobolev inequality  in the sub-Riemannian setting of the Heisenberg group.

\section{Preliminaries, the Hamilton--Jacobi equation.}
\label{defs} Let $(X,d)$ be a metric space. We say that $d$ is a
length metric, if for all $x,y \in X$ we have
\begin{equation*}
d(x,y) = \inf \textrm{length}(\gamma),
\end{equation*}
where the infimum is taken over all paths that connect $x$ and
$y$. Notice, that if $X$ is proper, i.e. its closed and bounded
sets are compact, then the infimum is attained and the space is,
in fact, geodesic \cite{AmTi}.

We remind the reader that a Borel measure $\mu$ is doubling, if
the measure of any open ball is positive and finite, and if there
exists a constant $c_d\geq 1$ such that
\[
\mu(B(x,2r)) \leq c_d \mu(B(x,r))
\]
for all $x\in X$ and $r>0$.  Here $B(x,r)$ denotes an open ball of
radius $r$ centered in $x$.

If $f$ is a real--valued Lipschitz function on $X$, we write
\[
\mathrm{lip}f(x)=\liminf_{r\to 0}\sup_{d(x,y)<r}
\frac{|f(x)-f(y)|}{r}
\]
for every $x \in X$.

Let \mbox{$1\leq p<\infty$}. We say that $(X,d,\mu)$ satisfies a
local $(1,p)$--Poincar\'e inequality (see, for example,
\cite{HeKo}) if there exists $1\le L <\infty$ and $C>0$, such that
for all Lipschitz functions $f$ we have
\begin{equation}\label{locPoin}
\av_{B(x,r)} |f-f_{B(x,r)}|\, d\mu \leq Cr \bigg(\av_{B(x,Lr)}
(\mathrm{lip}f)^p\, d\mu\bigg)^{1/p}
\end{equation}
for all $x\in X$ and $r>0$. Here we wrote
$$
f_{B(x,r)} = \av_{B(x,r)} f\,d\mu = \frac 1{\mu(B(x,r))}
\int_{B(x,r)} f\, d\mu.
$$

We remind the reader that if $\mu$ is doubling and the metric
space is complete, the above definition coincides with the a
priori stronger definition involving upper gradients; see
\cite{HeKo} and \cite{Ke}.

\smallskip

Throughout the paper we assume that $d$ is a
length metric and $(X,d)$ is proper. Without further notice all
measures on $(X,d)$ will be Borel probability measures.  We will
later impose further assumptions on the space when they are
needed.

\subsection{Metric gradient and Hamilton--Jacobi equation in geodesic spaces.}

Consider a function $f\colon X\times \R_{+} \to \R$. We define the
so called metric gradient of $f$ with respect to the variable
$x\in X$ at a point $(x_0,t)  \in X\times \R_+$ as
\begin{equation*}
|\nabla f|(x_0,t):=\limsup_{x\rightarrow x_0}\frac{\left|f(x,t)-f(x_0,t)\right|}{d(x,x_0)}.
\end{equation*}
For an arbitrary function this could be infinite, but if $f$ is
Lipschitz continuous in the $x$ variable, the metric gradient $|\nabla
f|(x_0,t) $ is always finite. However, it turns out that for the
Hamilton--Jacobi equation in metric spaces one should consider a
slightly different notion of a gradient. Following the lines in
\cite{LoVi2}, we introduce the so called {\it metric subgradient}
of $f$ defined as
\begin{equation*}|\nabla^- f|(x_0,t):=\limsup_{x\rightarrow x_0}\frac{\left[f(x,t)-f(x_0,t)\right]_-}{d(x,x_0)}=\limsup_{x\rightarrow x_0}\frac{\left[f(x_0,t)-f(x,t)\right]_+}{d(x,x_0)},
\end{equation*}
where $a_+= \max (a,0)$ and $a_-= \max (-a,0)$. Notice,  that
\[
|\nabla^- f|(x_0,t)|\leq |\nabla f|(x_0,t),
\]
and $|\nabla^- f|(x_0,t)$ vanishes if $f(\cdot,t)$ has a local
minimum at $x_0$. In fact, the metric subgradient indicates that the
local variation of $f(\cdot,t)$ takes into account only values
less than $f(x_0,t)$.

In analogy to the Euclidean case (see, for example, Evans
\cite{Ev}) the initial-value problem for the Hamilton--Jacobi
equation in a geodesic space can be defined as
\begin{eqnarray}
\left\{ \begin{array}{ll}
\frac{\partial}{\partial t}u(x,t)+H\big(|\nabla^- u|(x,t)\big) &= 0 \phantom{g(x)}  \textrm{in $X\times\R_+$}\\
\phantom{H(|\nabla^- u|_t(x,t))t))}u(x,t) &= g(x)\phantom{0}
\textrm{on $X\times\{t=0\}$}. \label{HJEX}
\end{array} \right.\end{eqnarray}

Throughout the paper we assume that the initial data $g\colon
X\rightarrow\R$ is Lipschitz continuous and the function $H\colon
\R_+ \rightarrow \R_+$ is convex, superlinear and satisfies the
condition $H(0)=0$. Here $H$ is called the \emph{Hamiltonian}, and
in the Euclidean case a standard example for such a function is
$x\mapsto\frac{1}{\alpha}\left|x\right|^\alpha$ for a real $\alpha
1$.

The corresponding Hopf--Lax formula (or the infimum--convolution)
is defined by
\begin{equation}
Q_tg(x)=\inf_{y\in
X}\Big[tL\bigg(\frac{d(x,y)}{t}\bigg)+g(y)\Big], \label{HLX}
\end{equation}
where $L\colon\R_+\to \R_+$ is simply the one--dimensional Legendre transform of $H$ defined by
\begin{equation}\label{Legendre}
L(u)= \sup_{v\in\R_+}\{ uv-H(v)\} , \ u\in \R_+.
\end{equation}
Notice, that by standard results the one--dimensional Legendre
transformation  $L$ is increasing, convex, superlinear and
satisfies $L(0)=0$. Moreover,
\[H(w) = \max _{v\in \R_+}\{ w v
-L(v)\}.
\]

We remind the reader that in the Euclidean case the Hopf--Lax
formula provides a Lipschitz--continuous solution to the
Hamilton--Jacobi equation \cite{Ev}. This has been generalized to the case of the
Heisenberg group \cite{MaSt} (see also \cite{Dr}) and to the present metric setting
setting by \cite{LoVi2} for quadratic Hamiltonians. We will show, that under further
assumptions on the space this holds also in the metric setting for general Hamiltonians.
Namely, we prove the following theorem in Section \ref{proofs}. Notice, that here $\mu$ needs not to be a probability measure.

\begin{thm}\label{solutionHJ}
\begin{enumerate} \item[(i)] The infimum in \eqref{HLX} is attained.
\item[(ii)]For $0\leq s < t$ we have the semigroup property
\begin{equation*}
Q_tg(x)=\min_{y\in X}\Big[(t-s)L
\bigg(\frac{d(x,y)}{t-s}\bigg)+Q_sg(y)\Big]
\end{equation*}
for all $x\in X.$ \item[(iii)] For all $x\in X$, $Q_tg(x)$ is non--increasing in $t$.
\item[(iv)] $(x,t)\mapsto Q_{t}g(x)$ is in  $\mathrm{ Lip}(X\times \R_+)$.
\item[(v)] For all $x\in X$, $u(x,t)=Q_tg(x)$ solves \eqref{HJEX}
for a.e. $t>0$. \item[(vi)] For every $x\in X$ and
$t>0$\begin{equation}\label{liminf} \liminf_{s\to
0^{+}}\frac{Q_{t+s}g(x)-Q_tg(x)}{s}\geq -H\big(|\nabla^-
Q_tg|(x)\big).
\end{equation}
\item[(vii)] If $(X,d,\mu)$ supports a local Poincar\'e inequality
and $\mu$ is doubling, then
\begin{equation*}
\limsup_{s\to 0^+} \frac { Q_{t+s}g(x) - Q_tg(x) }s \le  - H\big( |
\nabla^- Q_tg|(x)\big)
\end{equation*}
for all $t>0$ and $\mu$--a.e. $x\in X$. \item[(viii)] If
$(X,d,\mu)$ supports a local Poincar\'e inequality and
$\mu$ is doubling, $u(x,t)=Q_tg(x)$ solves \eqref{HJEX} for all $t>0$
and for $\mu$--a.e. $x\in X$.
\end{enumerate}
\end{thm}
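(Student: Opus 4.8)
The plan is to establish the eight assertions in their natural logical order, first building up the regularity of $Q_tg$ and only at the end extracting the differential identity; throughout I write $K=\mathrm{Lip}(g)$ and exploit that the superlinearity of $L$ localizes minimizers. For (i), fix $(x,t)$ and note that $y\mapsto tL(d(x,y)/t)+g(y)$ is continuous, while $g(y)\ge g(x)-Kd(x,y)$ and the superlinearity of $L$ force this functional to tend to $+\infty$ as $d(x,y)\to\infty$; hence a minimizing sequence stays in a bounded, thus (by properness) precompact, set, and the infimum is attained. The same computation shows that any minimizer $y$ obeys $tL(d(x,y)/t)\le Kd(x,y)$, so $d(x,y)/t$ is bounded in terms of $K$ and $L$ alone — the localization I will use repeatedly. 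For the semigroup property (ii) I would prove the two inequalities separately: the bound ``$\le$'' comes from splitting a geodesic from $x$ to the minimizer $w$ of $Q_tg(x)$ at the point at distance $\tfrac{t-s}{t}d(x,w)$ and using additivity of $L\big(\tfrac{d(x,w)}{t}\big)$, while the bound ``$\ge$'' follows from the triangle inequality together with the convexity and monotonicity of $L$, which give $(t-s)L\big(\tfrac{d(x,y)}{t-s}\big)+sL\big(\tfrac{d(y,z)}{s}\big)\ge tL\big(\tfrac{d(x,z)}{t}\big)$. Item (iii) is then immediate from (ii) on taking $y=x$ and using $L(0)=0$.

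For (iv), Lipschitz continuity in $x$ follows by inserting the minimizer of one point as a competitor for the other and using the local Lipschitz constant of the convex function $L$ on the bounded range of arguments supplied by the localization above; Lipschitz continuity in $t$ follows from (iii) for the upper bound and, for the lower bound, from $Q_{t+s}g(x)\ge Q_tg(x)-sH(\Lambda)$, obtained by combining (ii) with the spatial Lipschitz constant $\Lambda$ and the identity $\min_{r\ge0}[L(r)-\Lambda r]=-H(\Lambda)$ read off from \eqref{Legendre}. The heart of (v) and (vi) is the identification of the metric subgradient with the minimizer geometry: writing $D^+(x,t)$ for the largest value of $d(x,y)$ over minimizers $y$ in \eqref{HLX}, I would show $|\nabla^- Q_tg|(x)=L'\big(D^+(x,t)/t\big)$. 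The bound ``$\ge$'' comes from moving along a geodesic from $x$ toward the farthest minimizer, where $Q_tg$ decreases at rate $L'(D^+/t)$; the bound ``$\le$'' comes from using the minimizer $y'$ of a nearby point $x'$ as a competitor for $x$, a mean value estimate for $L$, and the stability of minimizers as $x'\to x$ (a consequence of properness and the continuity from (iv)). Granting this identity, (vi) follows from the semigroup formula $Q_{t+s}g(x)=\min_y\big[sL(d(x,y)/s)+Q_tg(y)\big]$: its minimizer $y_s$ satisfies $d(x,y_s)\to0$, so inserting the subgradient bound and optimizing $r\mapsto L(r)-|\nabla^-Q_tg|(x)\,r$ via \eqref{Legendre} yields \eqref{liminf}. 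Finally (v) holds because for fixed $x$ the map $t\mapsto Q_tg(x)$ is monotone, hence differentiable for a.e.\ $t$, and $D^+(x,\cdot)$ agrees with its left analogue off a countable set, so the forward and backward derivatives coincide there and equal $-H\big(|\nabla^-Q_tg|(x)\big)$.

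The remaining content is the upper bound on the forward difference quotient in (vii). Inserting the farthest minimizer $y$ of $Q_tg(x)$ as a competitor for $Q_{t+s}g(x)$ gives $Q_{t+s}g(x)-Q_tg(x)\le (t+s)L\big(\tfrac{D^+}{t+s}\big)-tL\big(\tfrac{D^+}{t}\big)$, and differentiating the right-hand side at $s=0$ together with the duality $H\big(L'(v)\big)=vL'(v)-L(v)$ yields $\limsup_{s\to0^+}\tfrac{Q_{t+s}g(x)-Q_tg(x)}{s}\le -H\big(L'(D^+(x,t)/t)\big)$, which by the slope identity is precisely the estimate claimed in (vii). The \emph{main obstacle} I anticipate is that this slope identity, and hence the estimate, must be made to hold for every $t>0$ simultaneously off a single $\mu$-null set of $x$, rather than merely for a.e.\ $t$ with $x$ fixed as in (v); this exchange of quantifiers is exactly where the hypotheses that $\mu$ be doubling and support a local Poincar\'e inequality are used, since the abundance of rectifiable curves and the a.e.\ differentiability of Lipschitz functions they guarantee let one control the descending slope uniformly. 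Granting (vii), item (viii) is immediate: for $\mu$-a.e.\ $x$ and every $t>0$ the lower bound \eqref{liminf} and the limsup bound of (vii) coincide, so the forward derivative of $t\mapsto Q_tg(x)$ exists and equals $-H\big(|\nabla^-Q_tg|(x)\big)$, i.e.\ $u=Q_tg$ solves \eqref{HJEX}.
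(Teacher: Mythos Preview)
Your treatment of (i)--(iv) matches the paper's, modulo two minor points: in (ii) the roles of the two ingredients are swapped (convexity together with the triangle inequality yields $Q_tg(x)\le\min_y[\dots]$, while geodesic splitting, by exhibiting a specific competitor for the right-hand minimum, yields the reverse inequality), and in (iv) the paper obtains the sharp spatial Lipschitz constant $\mathrm{Lip}(g)$ via a geodesic construction rather than your appeal to the local Lipschitz constant of $L$; both are valid.

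For (v)--(vii) your strategy through the slope identity $|\nabla^- Q_tg|(x)=L'\bigl(D^+(x,t)/t\bigr)$ is genuinely different from the paper. The paper never writes this identity: it proves (v) and (vi) by direct manipulation of the semigroup formula together with the Young inequality $H(w)+L(v)\ge wv$, and for (vii) it restricts the semigroup supremum to the sphere $S_{\alpha s}(x)$, sets $\psi(r)=\sup_{y\in S_r(x)}\bigl(f(x)-f(y)\bigr)/r$ with $f=Q_tg$, and then invokes a lemma of Lott--Villani asserting that $\liminf_{r\to 0}\psi(r)=|\nabla^- f|(x)$ for $\mu$-a.e.\ $x$ under the doubling and Poincar\'e hypotheses. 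That cited lemma is the \emph{only} place Poincar\'e enters the paper's argument. Your route is in fact stronger: if $L\in C^1$, the competitor-plus-stability argument you outline gives the slope identity at \emph{every} $(x,t)$, so your (vii) estimate holds pointwise without any Poincar\'e hypothesis --- this is essentially the later approach of Ambrosio--Gigli--Savar\'e. Consequently your ``main obstacle'' (making the slope identity hold for all $t$ off a single $\mu$-null set) is misdiagnosed; no such exchange of quantifiers is needed in your argument. Two caveats remain: you tacitly assume $L$ is differentiable, which the theorem does not, and your derivation of (v) via ``$D^+(x,\cdot)$ agrees with its left analogue off a countable set'' is considerably sketchier than the paper's direct verification of $\partial_t u + H(|\nabla^- u|)\le 0$ at points of $t$-differentiability.
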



\section{Logarithmic Sobolev inequalities and hypercontractivity of the Hamilton--Jacobi semigroup.}
\label{s-hc}
\subsection{Logarithmic Sobolev inequality}

The $q$--logarithmic--Sobolev inequality is a quantitative
expression of  the fact that the entropy of a function is
dominated by the $q$--norm of its gradient.  The entropy
functional for an integrable, non-negative function $h:X\to \R_+$
is defined by
\begin{equation} \label{ent}
\ent{\mu}{h} = \int_X h\log h \, d\mu - \int_X h\, d\mu\,\log\int_X
h\,d\mu .
\end{equation}

\begin{defn} \label{logsob}
If $K>0$ and $1< q \leq2$ we say that $(X,d,\mu)$ satisfies a
\textit{q--log--Sobolev inequality} with a constant $K$, $q{\text
-LSI(K)}$, if for any Lipschitz function $f$ we have
\begin{equation}\label{log-sobolev}
\ent{\mu}{|f|^q} \le (q-1) \left(\frac qK\right)^{q-1} \int_X {|\nabla^- f|}
^q\,d\mu.
\end{equation}
\end{defn}

Notice, that for $q>2$ it is not possible to have
\eqref{log-sobolev}, as for $f=1+\varepsilon g$, where
$\varepsilon \to 0$, the left--hand side behaves like
$\varepsilon^2$ where as the right--hand side like
$\varepsilon^q$; see \cite{BaKo}. Notice also, that Corollary 3.2.
in \cite{BaKo} provides an example of a measure that satisfies
\eqref{log-sobolev}.

\subsection{Hypercontractivity of the Hamilton-Jacobi semigroup}

The equivalence between the hypercontractivity of the quadratic Hamilton-Jacobi 
semigroup and the logarithmic Sobolev inequality in $\R^n$
is established in
\cite{BobGenLed}, and our approach follows the same lines.

Let $\mu$ be a probability measure on the Borel sets of $\R^n$. We
will denote by $\|\cdot \|_p$, $p\ge 1$, the $L^p$-norm with
respect to $\mu$. Bobkov et al.~\cite{BobGenLed} have shown that a
measure $\mu$ which is absolutely continuous with respect to the
Lebesgue measure satisfies the classical logarithmic Sobolev
inequality with constant $\rho$ if and only if the Hamilton-Jacobi
semigroup $Q_t$ associated to the quadratic inf-convolution
operator is hypercontractive, i.e. we have
\begin{equation}\label{hyperc}
\| e^{Q_t f} \|_{a+\rho t} \le \|e^f\|_a
\end{equation}
for every bounded measurable function $f$ on $\R^n$, every $t\ge 0
$ and every $a\in \R$. The strategy of the proof, going back to
Gross, consists of studying the monotonicity properties of the
left hand side of  ~(\ref{hyperc}) by differentiating with respect
to $t$.

\subsection{Hypercontractivity and Log--Sobolev inequality}
In this section we prove the equivalence between the
$q$--logarithmic Sobolev inequality and the hypercontractivity of
the corresponding Hamilton--Jacobi semigroup.
To state our result we impose additional conditions on the space
$X$ which guarantee that the infimum--convolution $Q_t f$ solves
the Hamilton-Jacobi equation for a Lipschitz initial--value
function $f$. We consider the Hamilton--Jacobi equation on $X$
with the Hamiltonian $H(v) = v^q/q$, which corresponds to $L(u) =
u^p/p$.

\begin{thm}\label{hypercontract}
Suppose that $(X,d,\mu)$ supports a local $(1,s)$--Poincar\'e
inequality for some $s\ge 1$, 
and $\mu$ is
doubling. Furthermore, assume that $(X,d,\mu)$ satisfies the
$q$--logarithmic Sobolev inequality with some constant $K$, and that 
$a$, $\rho>0$ are related by the inequality
\begin{equation}\label{consts}
a^{2-q}K^{q-1} \ge \rho (q-1).
\end{equation}
Then for every bounded measurable function $f$ on $X$ and
every $t\ge 0$
\begin{equation}\label{hc}
\| e^{Q_t f}\|_{a+ \rho t}\le \|e^{f}\|_a.
\end{equation}
Conversely, if~(\ref{hc}) holds for all $t\ge 0$, then the
$q$--logarithmic Sobolev inequality, $q{\text -LSI(K_0)}$, holds on $X$
with a constant $K_0$ which satisfies~(\ref{consts}) with an equality.
\end{thm}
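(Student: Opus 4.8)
The plan is to run the Gross--Herbst differentiation argument adapted to the metric Hamilton--Jacobi setting. For the forward implication, fix a bounded measurable $f$, set $\beta(t)=a+\rho t$, and define
\[
F(t)=\|e^{Q_tf}\|_{\beta(t)}=\Big(\int_X e^{\beta(t)Q_tf}\,d\mu\Big)^{1/\beta(t)},\qquad N(t)=\int_X e^{\beta(t)Q_tf}\,d\mu .
\]
Since $Q_0f=f$ we have $F(0)=\|e^f\|_a$, so it suffices to show that $F$ is non-increasing, i.e.\ that $\tfrac{d}{dt}\log F=-\tfrac{\rho}{\beta^2}\log N+\tfrac1\beta\tfrac{N'}{N}\le 0$. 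Differentiating $N$ and invoking Theorem~\ref{solutionHJ}(viii), which gives $\partial_tQ_tf=-H(|\nabla^-Q_tf|)=-\tfrac1q|\nabla^-Q_tf|^q$ for $\mu$--a.e.\ $x$, yields
\[
N'(t)=\rho\int_X e^{\beta Q_tf}\,Q_tf\,d\mu-\frac{\beta}{q}\int_X e^{\beta Q_tf}\,|\nabla^-Q_tf|^q\,d\mu .
\]

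The second step is to feed the $q$--log--Sobolev inequality into this expression. Applying \eqref{log-sobolev} to $\phi=e^{\beta Q_tf/q}$, for which $\phi^q=e^{\beta Q_tf}$ and, by the chain rule for the metric subgradient, $|\nabla^-\phi|=\tfrac{\beta}{q}\phi\,|\nabla^-Q_tf|$, gives
\[
\beta\!\int_X e^{\beta Q_tf}Q_tf\,d\mu-N\log N=\ent{\mu}{\phi^q}\le\frac{(q-1)\beta^q}{qK^{q-1}}\int_X e^{\beta Q_tf}|\nabla^-Q_tf|^q\,d\mu .
\]
Substituting this into $\beta^2N\,\tfrac{d}{dt}\log F=\rho\big(\beta\int_X e^{\beta Q_tf}Q_tf\,d\mu-N\log N\big)-\tfrac{\beta^2}{q}\int_X e^{\beta Q_tf}|\nabla^-Q_tf|^q\,d\mu$ reduces the desired inequality $\tfrac{d}{dt}\log F\le0$ to
\[
\rho(q-1)\,\beta(t)^{\,q-2}\le K^{q-1}.
\]
Because $q\le 2$ the exponent $q-2$ is non-positive and $\beta(t)=a+\rho t\ge a$, so $\beta(t)^{q-2}\le a^{q-2}$; thus the displayed inequality holds for every $t\ge0$ exactly when $a^{2-q}K^{q-1}\ge\rho(q-1)$, which is \eqref{consts}. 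This proves \eqref{hc}.

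For the converse I would reverse this computation at $t=0$. Assuming \eqref{hc} for all $t$, the function $\log F$ attains its maximum at $t=0$, so its right derivative there is non-positive. Taking $f$ bounded and Lipschitz and writing $h=e^{af}$, the lower bound \eqref{liminf} of Theorem~\ref{solutionHJ}(vi) together with Fatou's lemma produces a lower estimate for the right derivative of $N$ at $0$; combined with $\tfrac{d^+}{dt}\log F|_{0}\le0$ this yields
\[
\rho\,\ent{\mu}{h}\le\frac{a^2}{q}\int_X e^{af}|\nabla^-f|^q\,d\mu .
\]
Rewriting $h=\phi^q$ with $\phi=e^{af/q}$ and using $|\nabla^-\phi|=\tfrac aq\phi|\nabla^-f|$ turns this into $q{\text -LSI}(K_0)$ with $a^{2-q}K_0^{q-1}=\rho(q-1)$, i.e.\ \eqref{consts} with equality; a truncation/approximation argument then extends the inequality from test functions of the form $e^{af/q}$ to all Lipschitz functions.

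The main obstacle is analytic rather than algebraic: justifying the differentiation under the integral sign and the pointwise use of the Hamilton--Jacobi equation. Since $f$ is only bounded measurable, $Q_tf$ need not be Lipschitz at $t=0$, so I would first work on $[t_0,\infty)$ for $t_0>0$, where the semigroup property Theorem~\ref{solutionHJ}(ii) represents $Q_tf=Q_{t-t_0}(Q_{t_0}f)$ with Lipschitz initial datum $Q_{t_0}f$ so that parts (iv)--(viii) apply, and then let $t_0\to0^+$ using $Q_{t_0}f\to f$ and dominated convergence. The global Lipschitz-in-time bound of Theorem~\ref{solutionHJ}(iv) makes the difference quotients of $e^{\beta(t)Q_tf}$ dominated by a fixed $L^1(\mu)$ function on compact $t$--intervals, which legitimises both the interchange of limit and integral and the absolute continuity of $N$ (hence the passage from $\tfrac{d}{dt}\log F\le0$ a.e.\ to monotonicity); the same domination handles the Fatou step in the converse.
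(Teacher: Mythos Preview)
Your argument is essentially identical to the paper's: define $F(t)=\|e^{Q_tf}\|_{a+\rho t}$, differentiate, use $\partial_t Q_tf=-|\nabla^-Q_tf|^q/q$ from Theorem~\ref{solutionHJ}, and apply the $q$--LSI to $e^{(a+\rho t)Q_tf/q}$ to obtain $F'\le 0$; for the converse, evaluate the same identity at $t=0$ and substitute $e^{af}=g^q$. The only difference is that you are more explicit than the paper about the analytic justifications---differentiation under the integral via the Lipschitz bound of Theorem~\ref{solutionHJ}(iv), the semigroup reduction $Q_tf=Q_{t-t_0}(Q_{t_0}f)$ to handle merely bounded measurable initial data, and the approximation step needed to pass from test functions $e^{af/q}$ to arbitrary Lipschitz $g$---all of which the paper leaves implicit.
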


\begin{proof}
Let $F(t) = \|e^{Q_t f}\|_{\lambda(t)}$ with $\lambda (t) = a +
\rho t$, $t>0$. For all $t>0$, $\frac{\partial}{\partial t} Q_t
f(x)$ exists. Hence, $F(t)$ is differentiable at every point $t>0$, and we
get
\begin{equation}\label{Fineq}
\lambda^2(t) F(t)^{\lambda(t) -1} F'(t) = \rho \ent{\mu}{e^{\lambda(t)\, Q_t f}}+
\int_X \lambda^2(t) \frac\partial{\partial t} Q_t f \,e^{\lambda(t)\, Q_t f}\,d\mu.
\end{equation}
Since  $\frac{\partial}{\partial t} Q_t
f(x)=-|\nabla^- Q_tf(x)|^q/q$ $\mu$--a.e. in
$X$ by Theorem~\ref{solutionHJ} (vii), we have
\begin{equation*}
\lambda^2(t) F(t)^{\lambda(t) -1} F'(t) =\rho \ent{\mu}{e^{\lambda(t)\, Q_t f}} -  \lambda^2(t) \int_X \frac {|\nabla^-Q_t f|^q}q  e^{\lambda(t)\,
Q_tf } \, d\mu.
\end{equation*}

Since $Q_tf (x)$ is Lipschitz continuous, we can apply the
$q$--logarithmic Sobolev inequality to $e^{\lambda(t)\, Q_t f}$ to
deduce that $F'(t)\le 0$ for all $t>0$. Since $F(t)$ is continuous
it is non--increasing.

To prove the converse, consider a Lipschitz continuous function
$f$. Then (\ref{hc}) implies $F'(0)\le 0$. The Hamilton-Jacobi equation implies 
$$\left.\frac{\partial}{\partial t} Q_t
f(x)\right|_{t=0}=-|\nabla^- f(x)|^q/q$$ $\mu$--a.e. in
$X$. Thus regarding~(\ref{Fineq}) at $t=0$, we get
\begin{equation}
\rho \ent{\mu}{e^{af}} \le a^2 \int_X e^{af} \frac {|\nabla^-f|^q}q\, d\mu.
\end{equation}
By setting $e^{af} = g^q$ this leads to the $K_0$--logarithmic
Sobolev inequality, where $K_0$ satisfies~(\ref{consts}) with an equality.
\end{proof}

\begin{rem}
The hypercontractivity of the infimum convolution semigroup
holds only  for $q\le 2$.
\end{rem}
\begin{proof}
Indeed, suppose that $q>2$ and consider a bounded non--negative
function $f$ with $\mathop{\mathrm{essup}}_X f > \int_X f\,d\mu$.
Fix a small $\delta>0$.

Since $q>2$, it is possible to
choose $t\to \infty$, $\epsilon\to 0$ so that $\epsilon^{q-1} t = \delta$ and $\epsilon t \to \infty$.
Directly from the definition one can check that the scaling property of $Q_t$, namely
$$Q_t(\epsilon \, f)(x) = \epsilon \, \left( Q_{\epsilon^{q-1}t} f\right)(x)$$
holds for all $x\in X$ and $t,\epsilon >0$.

Then we get from~(\ref{hc}) that
$$ \| e^{Q_t (\epsilon f)}\|_{a+ \rho t}^{ 1/\epsilon}= \| e^{Q_\delta f}\|_{(a+ \rho t)\epsilon}  \le \|e^{\epsilon f}\|_a^{ 1 /\epsilon}=
\|e^f\|_{a\epsilon},$$
whence
$$e^{\mathop{\mathrm{essup}}_X (Q_\delta f)}\le e^{\int_X f\,d\mu}.$$
Letting $\delta\to 0$ we obtain a contradiction.
\end{proof}

\section{Talagrand and logarithmic Sobolev inequalities}\label{s-t}
\subsection{Wasserstein distance and the Talagrand inequality}

Let $1\leq p<\infty$. The $p$--Wasserstein distance between two
probability measures on $X$ is defined as
\begin{equation}\label{w}
  W_p(\mu,\nu)=\left(\inf\iint\frac1p{d(x,y)}^p\:d\pi(x,y)\right)^{1/p},
   \end{equation}
where the infimum is taken over all probability measures $\pi$ on
$X\times X$ with marginals $\mu$ and $\nu$. By the
Monge--Kantorovitch dual characterization, see~\cite{Ra}, we can
write
\begin{equation}\label{mk}
  {W_p(\mu,\nu)}^p = \sup \left[\int_X g\, d\nu - \int_X f\, d\mu\right],
\end{equation}
where the supremum is taken over all pairs $(f,g)$ of bounded
measurable functions such that for all $x$ and $y$ we have
\begin{equation}\label{leq}
g(x) \le  f(y) + \frac{{d(x,y)}^p}p .
\end{equation}

Recall that the entropy functional for an integrable,
non--negative function was defined in \eqref{ent} in the previous
section.
\begin{defn}
Let $p\geq 2$. We say that $(X,d, \mu )$ satisfies the {\it
$p$--Talagrand inequality} with a constant $K$, $p{\text
-Tal}(K)$, if for any probability measure $\nu\ll \mu$ on $X$
there holds
\begin{equation}\label{t}
  {W_p(\nu, \mu)}^p \le \frac 1K\ent{\mu}{\frac {d\nu}{d\mu}}.
\end{equation}
\end{defn}
Let us mention that our definition differs from  the standard version of the Talagrand
inequality defined for $1\le p\le 2$, namely
\begin{equation*}
  {W_p(\nu, \mu)}^2 \le \frac 1K\ent{\mu}{\frac {d\nu}{d\mu}},
\end{equation*}
which has been widely studied in the literature, see e.g.~\cite[Chapter 22]{Vil}.
As we shall show in Theorem \ref{SobTalPoi} below, the version~\eqref{t}
is equivalent to the appropriate $q$--logarithmic Sobolev inequality.

Notice, that if $d\nu /d\mu$ is of the form $1+ \varepsilon \:g$
where $\varepsilon \to 0$, then $\ent{\mu}{d\nu/ d\mu}$ is of
order $\varepsilon^2$, whereas ${ W_p(\nu,\mu)}^p$ is typically of
order $\varepsilon^p$ as the following example shows.

Let $(M, vol)$ be a smooth compact connected Riemannian manifold and let $\mu$ and $\nu$ be
two probability measures absolutely continuous with respect to $vol$, considered as elements
of the Wasserstein space of probability measures on $M$ with quadratic distance $W_2$. It is known (see~\cite{McCann}) that
there is a unique geodesic $\mu_\epsilon$ (with respect to $W_2$) in the Wasserstein space that joins
$\mu$ and $\nu$. Moreover, the measure is transported along the geodesics in $M$ in the following way.
There exists a family of maps $\{F_\epsilon\}_{\epsilon\in [0,1]} \colon M\to M$ such that $\mu_\epsilon = (F_\epsilon)_*\mu_0$.
More precisely, for almost all $m\in M$, $F_\epsilon(m) = \exp_m(-
\epsilon\nabla\phi(m))$ for a certain Lipschitz continuous
function $\phi$ on $M$ with an almost everywhere defined Hessian (see~\cite{RiemInterp}).
It follows that for small $\epsilon$ we have
$$\mu_\epsilon(dm) = \mu_0(dm) \, \big(1+ \epsilon\Delta \phi (m) +o(\epsilon) \big) .$$
Consider the coupling
$(\mathrm{Id}, F_\epsilon)_* \mu_0$ of $\mu_0$ and $\mu_\epsilon$. Then
\begin{equation} \begin{split}
W_p(\mu_0,\mu_\epsilon)^p & = \inf_{\pi }\int_{M\times M} \frac {d(x,y)^p}p \, d\pi(x,y) \\
& \le
\int_{M\times M} \frac {d(x,y)^p}p \, d\big((\mathrm{Id}, F_\epsilon)_* \mu_0\big)(x,y)  \\&
= \int_M \frac{
d(m, F_\epsilon(m))^p}{p} \,d\mu_0(m) \\ & = \epsilon^p \int_M \frac{|\nabla \phi(m)|^p}{p}\, d\mu_0(m).
  \end{split}
\end{equation}
Thus \eqref{t} does not hold for $1\leq p
< 2$.
\subsection{The dual formulation of the Talagrand inequality}

To establish a connection between the Talagrand and the
log--Sobolev inequality, we have to consider the dual formulation
of the Talagrand inequality using the Hamilton--Jacobi semigroup.
For an arbitrary function $f$ on $X$, consider the infimum
convolution \eqref{HLX} with Lagrangian $L(u)=u^p/p$, namely
$$Q_t f(x) = \inf_{y\in X} \left[  \frac { {d(x,y)}^p}{p\,t^{p-1}}+f(y)\right],$$
and write $Qf$ for $Q_1f$.
Following~\cite[Section 3.3]{BobGenLed}, we notice that by the
Monge--Kantorovitch duality \eqref{mk} and~(\ref{leq}), the $p$--Talagrand
inequality is equivalent to
\begin{equation}\label{infconvTal}
  \int_X \left(Qf - \textstyle{\int_X f\, d\mu}   \right){\frac {d\nu}{d\mu}}\, d\mu \le \frac 1K\ent{\mu}{\frac {d\nu}{d\mu}},
\end{equation}
for every bounded function $f$. Define two functions: $\psi_0 :
=K\left(Qf - \textstyle{\int_X f\, d\mu}\right) $ and $\phi :=
\frac {d\nu}{d\mu}$. Recall that by the variational
characterization of the entropy $$\ent{\mu}{\phi} = \sup_{\int_X
e^\psi \,d\mu \le 1} \int_X \psi\phi\, d\mu.$$ Indeed, the
left--hand side is smaller than or equal to the right--hand side
by definition. The converse inequality results from Jensen's
inequality applied to the convex function $ x\mapsto x\log x$ and the
probability measure  ${e^\psi}d\mu/{\int_X e^\psi d\mu}$.

Since~(\ref{infconvTal}) holds for every choice of
$\frac{d\nu}{d\mu}$,
it is therefore equivalent to $\int_X e^{\psi_0}\,d\mu \le 1$, i.e.
\begin{equation}\label{dualT}
  \int_X e^{K \: Qf}\, d\mu \le e^{K\int_X f\, d\mu} .
\end{equation}
The latter inequality is known as the dual form of the
$p$--Talagrand inequality.

%

\subsection{Talagrand and log-Sobolev inequality}

In order to state the main result of this section we need to
recall one more concept, the notion of displacement convexity from
\cite{LoVi}. Recall that for $p\in [1,\infty)$ the space $P_p(X)$
of Borel probability measures on a compact length space $X$ with the Wasserstein distance
$W_p$ is itself a compact length space, see~\cite[Remark 2.8]{LoVi}. If $\nu$
is a probability measure which is absolutely continuous with
respect to $\mu$, we define the entropy functional $U_\mu$ on
$P_p(X)$ by
\begin{equation}\label{entW}
U_\mu(\nu) = \int_X \frac {d\nu}{d\mu} \log\left(\frac {d\nu}{d\mu} \right)\,d\mu = \ent{\mu}{\frac {d\nu}{d\mu}}.
\end{equation}
Following~\cite{LoVi}, we say that it is {\it weakly displacement
convex} if for all $\nu_0$, $\nu_1\in P_p(X)$, there is some
Wasserstein geodesic $\{\nu_t\}_{t\in [0,1]}$ from $\nu_0$ to
$\nu_1$ along which
\begin{equation}\label{conv}
U_\mu(\nu_t) \le  t U_\mu(\nu_1) + (1-t) U_\mu (\nu_0).
\end{equation}
Notice also, that in the Riemannian setting, Villani considers a version of Theorem~\ref{SobTalPoi} with a different choice of Lagrangian; see \cite{Vil}[Thm 22.28].

\begin{thm}\label{SobTalPoi} Let $2\geq q>1$ and $p\ge 2$ be its conjugate, so that $1/p+1/q=1$.
\begin{enumerate}
\item[(i)] Let $(X,d,\mu)$ satisfy the $p$--Talagrand inequality
with some constant $K>0$, and assume that $X$ is compact. If the entropy functional
$U_\mu(\cdot)$ is weakly displacement
convex  then $(X,d,\mu)$ also satisfies the $q$-logarithmic
Sobolev inequality with the constant $K p^{-p}$.

\item[(ii)] Suppose that $(X,d,\mu)$ supports a local
$(1,s)$--Poincar\'e inequality for some $s\ge 1$, and $\mu$ is
doubling. Then, if $(X,d,\mu)$ satisfies the $q$--logarithmic
Sobolev inequality with some constant $K>0$, then it also
satisfies the $p$--Talagrand inequality with the same constant.
\end{enumerate}
\end{thm}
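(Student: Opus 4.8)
\medskip
\noindent\textbf{Proof proposal.} The two implications are proved by entirely different mechanisms, so I treat them in turn. For (ii) the plan is to establish the dual form \eqref{dualT} of the $p$-Talagrand inequality, which was shown above to be equivalent to \eqref{t}, by flowing from $t=0$ to $t=1$ along the Hamilton--Jacobi semigroup with a carefully chosen \emph{nonlinear} exponent; the linear exponent $\lambda(t)=a+\rho t$ of Theorem~\ref{hypercontract} degenerates as $a\to 0^{+}$ when $q<2$ and cannot reach both endpoints, which is exactly the obstruction one must circumvent. For (i) the plan is the HWI route: weak displacement convexity of $U_\mu$ plays the role of a (vanishing) lower curvature bound and yields an HWI-type inequality, into which the Talagrand inequality is then inserted to solve for the entropy.

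For (ii), fix a bounded Lipschitz $f$ (the general bounded case follows by approximation) and set
\begin{equation*}
H(t)=\frac{1}{\alpha(t)}\log\int_X e^{\alpha(t)\,Q_tf}\,d\mu,\qquad \Psi(t)=\int_X e^{\alpha(t)\,Q_tf}\,d\mu,
\end{equation*}
with an exponent $\alpha(t)>0$ to be chosen. Since $\mu$ is doubling and supports a local Poincar\'e inequality, Theorem~\ref{solutionHJ}(viii) gives $\frac{\partial}{\partial t}Q_tf=-|\nabla^- Q_tf|^q/q$ for a.e.\ $t$ and $\mu$-a.e.\ $x$, so $H$ is differentiable on $(0,1]$ and a direct computation yields
\begin{equation*}
H'(t)=\frac{1}{\Psi(t)}\left[\frac{\alpha'(t)}{\alpha(t)^2}\,\ent{\mu}{e^{\alpha(t)Q_tf}}-\frac1q\int_X |\nabla^- Q_tf|^q\,e^{\alpha(t)Q_tf}\,d\mu\right].
\end{equation*}
Applying the $q$-logarithmic Sobolev inequality \eqref{log-sobolev} to $e^{\alpha(t)Q_tf/q}$ turns the bracket negative precisely when $\alpha'\le \frac{K^{q-1}}{q-1}\,\alpha^{2-q}$, and the choice $\alpha(t)=K\,t^{p-1}$ solves this with equality while satisfying $\alpha(1)=K$ and $\alpha(t)\to 0$ as $t\to 0^{+}$. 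Hence $H$ is non-increasing on $(0,1]$; letting $t\to 0^{+}$, using $Q_tf\to f$ uniformly and $\alpha(t)\to 0$, gives $\lim_{t\to 0^{+}}H(t)=\int_X f\,d\mu$, so that $H(1)\le \int_X f\,d\mu$, i.e.\ $\int_X e^{K\,Qf}\,d\mu\le e^{K\int_X f\,d\mu}$. This is \eqref{dualT}, hence \eqref{t} with the same constant $K$. The delicate points here are the differentiation under the integral sign and the boundary limit at $t=0$, where $\alpha(t)\to 0$.

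For (i), compactness of $X$ ensures that $(P_p(X),W_p)$ is a geodesic space, so a measure $\nu=h\,\mu\ll\mu$ can be joined to $\mu$ by a Wasserstein geodesic $\{\nu_t\}$ along which \eqref{conv} holds; run it from $\nu_0=\nu$ to $\nu_1=\mu$. Weak displacement convexity then gives, since $U_\mu(\mu)=0$,
\begin{equation*}
\ent{\mu}{h}=U_\mu(\nu)\le -\frac{d}{dt}\Big|_{t=0^{+}}U_\mu(\nu_t).
\end{equation*}
Writing $f=h^{1/q}$, I would bound the right-hand side by a H\"older inequality against the initial velocity field $v_0$ of the geodesic, producing an HWI-type estimate with no curvature defect,
\begin{equation*}
\ent{\mu}{h}\le \left(\int_X |\nabla^-\log h|^q\,d\nu\right)^{1/q}\,\|v_0\|_{L^p(\nu)},
\end{equation*}
where $\|v_0\|_{L^p(\nu)}=p^{1/p}W_p(\nu,\mu)$ on account of the $\tfrac1p$ normalisation inside $W_p$, and where $\int_X|\nabla^-\log h|^q\,d\nu=q^q\int_X|\nabla^- f|^q\,d\mu$. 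Finally I would insert the $p$-Talagrand bound $W_p(\nu,\mu)\le (K^{-1}\ent{\mu}{h})^{1/p}$ and solve the resulting inequality for $\ent{\mu}{h}$; since $1-\tfrac1p=\tfrac1q$ and $\tfrac{q}{p}=q-1$, the entropy factors out and one is left with $\ent{\mu}{h}\le q^q p^{q-1}K^{-(q-1)}\int_X|\nabla^- f|^q\,d\mu$, which is exactly the $q$-logarithmic Sobolev inequality with constant $Kp^{-p}$.

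The main obstacle is the HWI step of part (i): on a metric measure space there is no smooth velocity field, so I expect to represent the initial velocity of the Wasserstein geodesic through a Kantorovich potential (equivalently, through the infimum-convolution $Q_tf$ itself) and to control the metric derivative of the entropy by $|\nabla^-\log h|$, using the finite-difference quotient $\big(U_\mu(\nu_t)-U_\mu(\nu)\big)/t$ so as to avoid a full differentiable first-variation formula---this is precisely what weak displacement convexity makes available. Extracting the sharp dimensional constant $Kp^{-p}$, rather than merely some constant of the right form, is the other point requiring care.
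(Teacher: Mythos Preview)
Your proposal is correct and follows essentially the same route as the paper in both parts. For (ii), your nonlinear exponent $\alpha(t)=Kt^{p-1}$ is exactly the paper's choice $Kt^{n}$ with $n=1/(q-1)=p-1$, and the monotonicity computation, the application of \eqref{log-sobolev} to $e^{\alpha(t)Q_tf/q}$, and the limit $\lim_{t\to 0^+}H(t)=\int_X f\,d\mu$ all match the paper's argument line for line. For (i), the paper short-circuits what you call the ``main obstacle'' by invoking \cite[Proposition~3.36]{LoVi} directly to obtain
\[
U_\mu(\nu)\le \int_{X\times X}\frac{|\nabla^- f(x_0)|}{f(x_0)}\,d(x_0,x_1)\,d\pi(x_0,x_1)
\]
for the optimal coupling $\pi$; this is precisely the metric HWI inequality you anticipate needing, and after that the H\"older step, the substitution of the $p$-Talagrand bound, and the replacement $f\mapsto|g|^q$ proceed exactly as you outline, yielding the constant $Kp^{-p}$.
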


\begin{proof}


Consider a probability measure $\nu$ on $X$ with a positive
Lipschitz continuous density function $f$ with respect to $\mu$.
Then from~\cite[Proposition 3.36]{LoVi} it can be easily deduced
that
\begin{equation}\label{coupl}
U_\mu(\nu) \le \int_{X\times X} \frac {|\nabla^- f(x_0)|}{f(x_0)} \, d(x_0,x_1)\, d\pi(x_0\, x_1),
\end{equation}
where $\pi$ is the optimal coupling of $(\nu, \mu)$.
Applying the H\"older inequality on the right-hand side gives
\begin{equation}
U_\mu(\nu)  \le    p^{1/p}\, W_p(\mu, \nu)\left( \int_X \frac {|\nabla^- f(x_0)|^q}{f^{q-1}(x_0)} \, d\mu(x_0)
\right)^{1/q} .
\end{equation}
Hence the $p$--Talagrand inequality implies
\begin{equation}
U_\mu(\nu) = \ent{\mu}{f}\le  \left(\frac pK\right)^{q/p}\int_X \frac {|\nabla^- f|^q}{f^{q-1}}\, d\mu.
\end{equation}
Replacing $f$ with $|g|^q$ we arrive at the $q$--logarithmic
Sobolev inequality, $q{\text -LSI(K p^{-p})}$, with the desired
constant. This proves $(i)$.

\bigskip
To prove $(ii)$ we follow the idea in \cite{LoVi2}. We consider the
Hamilton--Jacobi equation on $X$ with the Hamiltonian $H(v) =
v^q/q$, which corresponds to $L(u) = u^p/p$ and the associated
semigroup \eqref{HLX} $Qf= Q_1f$. From the Talagrand inequality in
its dual formulation \eqref{dualT} it follows that it is
sufficient to show that
\begin{equation}
\int_X e^{K \: Qf}\, d\mu \le e^{K\int_X f\, d\mu}
\end{equation}
for every continuous bounded function $f$. Set, for some $n \ge 1$,
\begin{equation}
\phi(t) = \frac 1 {Kt^n} \log\left( \int_X e^{K t^n Q_t f}\,
d\mu\right).
\end{equation}
Since $f$ is bounded, we know that $Q_t f $ is bounded
uniformly in $t$. Thus
\begin{equation}
\int_X e^{K t^n Q_t f}\, d\mu =1 + Kt^n \int_X Q_t f \, d\mu +
O(t^{2n}),
\end{equation}
and
\begin{equation}
\phi(t) = \int_X Q_t f\, d\mu +O(t^n).
\end{equation}
Since $Q_t f\to f$ as $t\to 0^+$,
we have by the dominated convergence theorem that
\begin{equation}
\lim_{t\to 0^+} \phi(t) = \int_X f\, d\mu.
\end{equation}
Therefore, our goal is to prove that $\phi(1)\le \lim_{t\to 0^+}
\phi(t)$. For this, it suffices to prove that $\phi(t) $ is
non--increasing in $t$. Let us fix $t\in(0,1]$. For $s>0$, we have
\begin{equation}
\begin{split}
\frac {\phi(t+s) - \phi(t) } s =& \frac 1s \left( \frac
1{K(t+s)^n} - \frac 1{K t^n}\right)
\log\int_X e^{K(t+s)^n Q_{t+s} f}\, d\mu \\
&+ \frac 1 {Kt^n s} \left( \log\int_X e^{K(t+s)^n Q_{t+s} f}\,d\mu
- \log \int_X e^{K t^nQ_t f}\, d\mu\right).
\end{split}
\end{equation}
As $s\to 0^+$, the first term on the right--hand side converges to
\begin{equation}
-\frac n{K \, t^{n+1}}\log \left( \int_X e^{K t^n Q_t f }\, d\mu\right).
\end{equation}
The limit of the second term, provided it exists, is
\begin{equation}
\frac 1{K t^n} \frac 1{\int_X e^{K t^n Q_t f}\, d\mu}\:
\lim_{s\to 0^+}  \left[\frac 1s \left(\int_X e^{K (t+s)^n Q_{t+s}
f}\, d\mu  - \int_X e^{K t^n Q_t f}\,d\mu\right) \right].
\end{equation}
The expression in brackets can be written as
\begin{equation}\label{limarr}
\int_X \left( \frac {e^{K(t+s)^n Q_{t+s} f} - e^{Kt^n Q_{t+s} f}} s\right)\, d\mu
+\int_X \left( \frac { e^{Kt^n Q_{t+s} f} - e^{Kt^n Q_t f} }
s\right)\, d\mu.
\end{equation}
The first term in~(\ref{limarr}) has the form $e^{K t^n Q_{t+s}
f}(e^{K(nt^{n-1} s+o(s))Q_{t+s} f} -1)/s$ so it converges
to $(e^{K t^{n}Q_t f}) Knt^{n-1}Q_t f$ as $s\to 0^+$.
By the dominated convergence theorem the first integral in~(\ref{limarr}) thus converges to
$$\int_X Knt^{n-1} Q_t f e^{K t^n Q_t f}\, d\mu.$$

Let us now consider the second term of~(\ref{limarr}). By Theorem
\ref{solutionHJ}  (vi) and (vii), for $\mu$--a.e. $x\in X$ we have
\begin{equation}
Q_{t+s}f(x) = Q_t f(x) - s\left(\frac {{|\nabla^-Q_t f(x)|}^q}q +o(1)\right),
\end{equation}
and therefore
\begin{equation}\label{limDC}
\lim_{s\to 0^+}\frac{ e^{K t^n Q_{t+s} f} - e^{K t^n Q_t f}} s =
-Kt^n e^{Kt^n Q_t f} \frac {{|\nabla^-Q_t f|}^q}q.
\end{equation}
On the other hand, as $Q_{(\cdot)}g(\cdot)$ is Lipschitz on
$X\times \R_+$, $Q_{t+s} f= Q_t f + O(s)$ holds uniformly on
$X$. Since $Q_t f(x)$ is uniformly bounded in $x$, we deduce
that
\begin{equation}\label{forDC}
\frac{e^{K t^n Q_{t+s}f} - e^{K t^n Q_t f}} s =O(1)
\end{equation}
as $s\to 0^+$. In view of~(\ref{limDC}) and ~(\ref{forDC}) we
apply the dominated convergence theorem to compute the
limit of the second integral in~(\ref{limarr}), that is,
\begin{equation}
\lim_{s\to 0^+}\int_X\left(\frac {e^{Kt^n Q_{t+s} f} -
e^{Kt^n Q_t f}} s\right)\,d\mu = - K t^n\int_X \frac{{|\nabla^-
Q_t f|^q}}q e^{Kt^n Q_t f}\, d\mu.
\end{equation}
In summary, we have
\begin{equation}\label{final}
\begin{split}
\lim_{s\to 0^+}&\left[ \frac{\phi(t+s)-\phi(t)}s\right]=\\
&\frac 1{Kt^{n+1}\int_X e^{Kt^n Q_t f}\,d\mu} \bigg[-n\log
\left(\int_X e^{Kt^n Q_t f}\, d\mu\right) \int_X e^{K t^n Q_t
f}\,d\mu  \\ & + \int_X n  Kt^n Q_t f \:e^{Kt^n Q_t f} \,d\mu
-\int_X Kt^{n+1} \frac{{|\nabla^- Q_t f|}^q}q\: e^{K t^n Q_t
f}\,d\mu\bigg].
\end{split}
\end{equation}

Recall that for $q\in (1,2]$, the $q$--logarithmic Sobolev
inequality with constant $K$ states that for every Lipschitz
function $g$ on $X$
\begin{equation}\label{sobolev}
\ent{\mu}{|g|^q} \le (q-1) \left(\frac qK\right)^{q-1} \int_X {|\nabla^- g|}
^q\, d\mu.
\end{equation}
Set $n= 1/(q-1)$. Applying~(\ref{sobolev}) with $g = \exp
\left(Kt^n Q_tf/q \right)$ shows that~(\ref{final}) is
non--positive, and $(ii)$ follows.
\end{proof}

\begin{rem}
Let p=q=2. In the setting of Riemannian manifolds, i.e. when $X=(M, vol)$,
the displacement convexity in the first part of
Theorem~\ref{SobTalPoi} is verified if the reference measure $\mu
=e^{-V}\, vol$, with $\mu(M)=1$ and $V\in C^2(M)$, satisfies the curvature-dimension
$CD(0,\infty)$ inequality; see~\cite{LoVi}.
\end{rem}

\section{Solutions to Hamilton--Jacobi equation}
\label{proofs}



\begin{proof}[Proof of Theorem \ref{solutionHJ} (i)] Fix $x\in X$ and $t > 0$. Notice, that by choosing $y=x$ in \eqref{HLX} we
get $Q_tg(x) \leq g(x)$.

Let $(y_n)$ be a minimizing sequence in \eqref{HLX} and assume
first that it is bounded.
Since $X$ is proper there exists $y_0\in X$ and a subsequence
$(y_{n_k})$ such that $y_{n_k} \to y_0$, whence the continuity of
$L$ and $g$ imply that
$$Q_tg(x)=\lim_{k\to \infty}\Big\{tL\bigg(\frac{d(x,y_{n_k})}{t}\bigg) +g(y_{n_k})\Big\}= tL\bigg(\frac{d(x,y_0)}{t}\bigg) +g(y_0).$$
On the other hand, if $\lim_{n\to \infty} d(y_n,x) \to
\infty$, the 
superlinearity of $L$ implies for any $M>0$ we have
$$L\bigg(\frac{d(x,y_n)}{t}\bigg)\geq M \frac{d(x,y_n)}{t}$$
for $n$ large enough. Multiplying the above  inequality by $t$ and
adding $g(y_n)$ on both sides we get
$$tL\bigg(\frac{d(x,y_n)}{t}\bigg)+ g(y_n) \geq M d(x,y_n) +g(y_n) \geq (M-lip(g))d(x,y_n)
-|g(x)|,$$ since $g$ is Lipschitz. Choosing $M:=lip(g) +1$ we
obtain
$$tL\bigg(\frac{d(x,y_n)}{t}\bigg) + g(y_n) \geq d(x,y_n) -|g(x)|,$$
which implies that
$$\lim_{n\to \infty}tL\bigg(\frac{d(x,y_n)}{t}\bigg) + g(y_n) = \infty, $$
which is a contradiction. Hence $(y_n)$ is bounded and the infimum
in \eqref{HLX} is attained.
\end{proof}




\begin{proof}[Proof of Theorem \ref{solutionHJ} (ii)]
Fix $q\in X$. By (i) there exists a $v\in X$ such that
\begin{equation*}
Q_sg(q)=sL\bigg(\frac{d(v,q)}{s}\bigg)+g(v).
\end{equation*}
Set $\tau:=\frac{s}{t}$, $\sigma:=\frac{t-s}{t}$, and use the
monotonicity and convexity of $L$ to obtain
\begin{equation*}
\begin{split}
L\bigg(\frac{d(v,p)}{t}\bigg) &\leq L\bigg(\tau\frac{d(v,q)}{\tau
t}+\sigma\frac{d(q,p)}{\sigma t}\bigg)
                          \leq  \tau L\bigg(\frac{d(v,q)}{\tau t}\bigg)+\sigma L\bigg(\frac{d(q,p)}{\sigma t}\bigg)\\
                             &=  \frac{s}{t}L\bigg(\frac{d(v,q)}{s}\bigg)+ \frac{t-s}{t}L\bigg(\frac{d(q,p)}{t-s}\bigg).
\end{split}
\end{equation*}
Multiplying the inequality by $t$ and adding $g(v)$ on both sides
yields
\begin{equation*}
\begin{split}
Q_tg(p) \leq tL\bigg(\frac{d(v,p)}{t}\bigg)+g(v) & \leq  (t-s)L\bigg(\frac{d(q,p)}{t-s}\bigg) + sL\bigg(\frac{d(v,q)}{s}\bigg)+g(v) \\
                             & =  (t-s)L\bigg(\frac{d(q,p)}{t-s}\bigg) + Q_sg(q).
\end{split}
\end{equation*}
Since $q\in X$ is  arbitrary we obtain
\begin{equation*}
Q_tg(p)\leq \min_{q\in
X}\Big\{(t-s)L\bigg(\frac{d(q,p)}{t-s}\bigg)+Q_sg(q)\Big\}.
\end{equation*}
Notice, that this does not depend on the fact that $d$ is a length
metric.

To show the reverse inequality
we use the properties of the geodesic metric $d$. Again by
(i) we can choose for $(p,t)\in X\times\R_+$ such $w\in X$ that
is minimizes \eqref{HLX}.
Now, if $q'\in X$ is on a length-minimizing path from $p$ to $w$,
we have
\begin{equation*}
d(w,p)=d(q',p)+d(w,q'),
\end{equation*}
and for a given $\sigma,\tau >0$ such that $\sigma+\tau = 1$ we
can find $q'\in X$ satisfying
\begin{equation*}
\begin{split}
d(q',p)= \tau d(w,p),\qquad d(w,q')= \sigma d(w,p).
\end{split}
\end{equation*}
By setting $\sigma=\frac{s}{t}$, and consequently
$\tau=\frac{t-s}{t}$, we obtain
\begin{equation*}
\frac{d(w,p)}{t}=\frac{t}{t-s}\frac{d(q',p)}{t}=\frac{t}{s}\frac{d(w,q')}{t}
\end{equation*}
and, moreover,
\begin{equation*}
L\bigg(\frac{d(w,p)}{t}\bigg)=L\bigg(\frac{d(q',p)}{t-s}\bigg)=L\bigg(\frac{d(w,q')}{s}\bigg).
\end{equation*}
This implies that
\begin{equation}
tL\bigg(\frac{d(w,p)}{t}\bigg)=(t-s)L\bigg(\frac{d(q',p)}{t-s}\bigg)
+ sL\bigg(\frac{d(w,q')}{s}\bigg). \label{lkj}
\end{equation}
Finally, we add $g(w)$ on both sides of (\ref{lkj}) and deduce
\begin{equation*}
\begin{split}
Q_tg(p) & =  tL\bigg(\frac{d(w,p)}{t}\bigg)+g(w)=(t-s)L\bigg(\frac{d(q',p)}{t-s}\bigg) + sL\bigg(\frac{d(w,q')}{s}\bigg)+g(w)\\
  & \geq  (t-s)L\bigg(\frac{d(q',p)}{t-s}\bigg)+\min_{v\in X}\Big\{sL\bigg(\frac{d(v,q')}{s}\bigg)+g(v)\Big\}\\
     & =  (t-s)L\bigg(\frac{d(q',p)}{t-s}\bigg)+Q_sg(q')\\
  & \geq  \min_{q\in X}\Big\{(t-s)L\left(\frac{d(q,p)}{t-s}\right)+Q_sg(q)\Big\}.
\end{split}
\end{equation*}
\end{proof}


\begin{proof}[Proof of Theorem \ref{solutionHJ} (iii)]

By (ii), for a fixed $p\in X$ we have
\begin{equation}
\begin{split}
Q_tg(p) & =  \min_{q\in X}\Big\{(t-s)L\bigg(\frac{d(q,p)}{t-s}\bigg)+Q_sg(q)\Big\}\\
  & \leq  (t-s)L(0)+Q_sg(p)=Q_sg(p)\label{hilfsgleichung}
\end{split}
\end{equation}
by choosing $p=q$ and using $L(0)=0$.

\end{proof}




\begin{proof}[Proof of Theorem~\ref{solutionHJ} (iv).]
In fact, we will prove that $$lip\left(Q_{(\cdot)} g(\cdot)\right)\leq \max\{lip(g), H(lip(g))\}$$
where $lip$ stands for the Lipschitz constant of the corresponding function (of one or two
variables). On $X\times \R_+$ we assume the canonical product metric
\[
d_{X\times\R_+}((x,t), (y,s))=  d(x,y) + | s-t|.
\]
We recall that by Rademacher's theorem the Lipschitz continuity of
$Q_{(\cdot)} g(\cdot)$ implies differentiability of $Q_{(\cdot)} g(x)$ a.e. in the $t$ variable.

We shall  fix $t>0$ and show the Lipschitz continuity of $x\to
Q_tg(x)$ first. 
Let $x,\xi\in X$ be arbitrary, and choose a minimizing $y_0$ in \eqref{HLX} for $(\xi,t)$. 
By the
Lipschitz continuity of $g$ we get
\begin{equation}
\label{hilf}
\begin{split}
Q_tg(x)-Q_tg(\xi) &\leq  tL\bigg(\frac{d(q,x)}{t}\bigg)+g(q)-tL\bigg(\frac{d(\xi,y_0)}{t}\bigg)-g(y_0)\\
                       &\leq t\bigg[L\bigg(\frac{d(q,x)}{t}\bigg)-L\bigg(\frac{d(\xi,y_0)}{t}\bigg)\bigg]+lip(g)d(q,y_0)
\end{split}
\end{equation}
for any  $q\in X$.  

Assume first that $d(x,y_0)\geq d(x,\xi)$. Choose $q$ on the
minimizing geodesic from $y_0$ to $x$ such that
$d(q,y_0)=d(x,\xi)$, and hence \mbox{$d(x,q)\leq d(\xi,y_0)$.}
Since $L$ is increasing this with \eqref{hilf} implies that 
\begin{equation*}
\begin{split}
Q_tg(x)-Q_tg(\xi) &\leq t\bigg[L\bigg(\frac{d(q,x)}{t}\bigg)-L\bigg(\frac{d(\xi,y_0)}{t}\bigg)\bigg]+lip(g)d(q,y_0) \\
                             &\leq  lip(g)d(x,\xi).
\end{split}
\end{equation*}

Assume then that $d(x,y_0)<d(x,\xi)$. Since $Q_tg(x)\leq g(x)$,
choose $q=x$ in \eqref{hilf} to obtain
\begin{multline*}
Q_tg(x)-Q_tg(\xi) \leq g(x)-tL\bigg(\frac{d(\xi,y_0)}{t}\bigg)-g(y_0)\\
                             \leq lip(g)d(x,y_0)-tL\bigg(\frac{d(\xi,y_0)}{t}\bigg)\leq lip(g)d(x,\xi).
\end{multline*}
The two estimates now lead to
\[
Q_tg(x)-Q_tg(\xi) \leq lip(g)d(x,\xi)
\]
for all $x, \xi \in X,$ and simply interchanging $p$ and $\xi$
implies the desired Lipschitz continuity.

We now turn to the Lipschitz continuity of $t\to Q_tg(x)$. With no
loss of generality we assume $0<s<t$. Since $u$ is non--increasing
in $t$ we have $Q_tg(x)-Q_sg(x)\leq 0$. 
By (ii) we get
\begin{equation*}
\begin{split}
Q_tg(x)      &= Q_sg(x)+\min_{q\in X}\Big\{(t-s)L\bigg(\frac{d(x,q)}{t-s}\bigg)+Q_sg(q)-Q_sg(x))\Big\}\\
     & \geq Q_sg(x)+\min_{q\in X}\Big\{(t-s)L\bigg(\frac{d(x,q)}{t-s}\bigg)-lip(g)d(x,q)\Big\}\\
        & \geq  Q_sg(x)+(t-s)\min_{v\in\R_+}\{L(v)-lip(g)v\} \\
        & =  Q_sg(x)-(t-s)H(lip(g)),
\end{split}
\end{equation*}
where $v=d(x,q)/(t-s)$. This shows that
\begin{equation*}
|Q_tg(x)-Q_sg(x)|\leq H(lip(g))|t-s|.
\end{equation*}
Now the Lipschitz continuity in both variables imply
\begin{equation*}
\begin{split}
|Q_tg(p)-Q_sg(\xi)| &\leq |Q_tg(p)-Q_sg(p)| +|Q_sg(p)-Q_sg(\xi)| \\
&\leq \max\{H(lip(g)), lip(g)\}(|t-s| +d(p,\xi)).
\end{split}
\end{equation*}
\end{proof}




\begin{proof}[Proof of Theorem \ref{solutionHJ} (v).]

We show that
\begin{equation}\label{less-than-0}
\frac{\partial}{\partial t}u(x,t)+ H(|\nabla^- u|(x,t)) \leq 0
\end{equation}
holds for every $x\in X$ and a.e. $t\in \R_+$ for
$u(x,t)=Q_tg(x)$. The converse inequality follows from (vi).

Fix $x\in X$ and let $t\in \R_+$ a point of differentiability of
$u(x,\cdot)$. If $|\nabla^- u|(x,t) =0$, \eqref{less-than-0}
reduces to $u_t(x,t) \leq 0$ since $H(0)=0$. This clearly holds
since $u(x,\cdot)$ is non--increasing.

We can thus assume that $|\nabla^- u|(x,t) >0$, and there exists a
sequence $x_n \to x$ for which $u(x_n,t) < u(x,t)$ and
$$|\nabla^- u|(x,t) = \lim_{n\to \infty} \frac{u(x,t)-u(x_n,t)}{d(x_n,x)}.$$
For the moment, consider any positive sequence $(h_n)$ with
$h_n\to 0$. By the semi--group property (ii) we get
\[
u(x, t+h_n) = \min_{y\in
X}\Big\{h_nL\bigg(\frac{d(x,y)}{h_n}\bigg) + u(y,t)\Big\} \leq
h_nL\bigg(\frac{d(x,x_n)}{h_n}\bigg) + u(x_n,t),
\]
which implies that
\begin{equation}\label{upper est}
\begin{split}
\frac{u(x,t+h_n)-u(x,t)}{h_n} &\leq  
-\left[\frac{u(x,t)-u(x_n,t)}{h_n}-L\bigg(\frac{d(x,x_n)}{h_n}\bigg)\right].
\end{split}
\end{equation}
Since $H(w)= \max_{v\in \R_+} \{wv-L(v)\} $ for all $w\in \R_+$,
for each $n$ it is possible to choose $h_n>0$ such that
\begin{equation}\label{hn}
H\bigg(\frac{u(x,t)-u(x_n,t)}{d(x_n,x)}\bigg) =
\frac{u(x,t)-u(x_n,t)}{h_n}-L\bigg(\frac{d(x,x_n)}{h_n}\bigg)
\end{equation}
holds. Furthermore, it is easy to see directly from \eqref{hn}
that $x_n\to x$ implies $h_n \to 0$.

Finally, combining \eqref{upper est} and \eqref{hn} we obtain
$$ \frac{u(x,t+h_n)-u(x,t)}{h_n} + H\left(\frac{u(x,t)-u(x_n,t)}{d(x_n,x)}\right) \leq 0.$$
As $x_n\to x$ and $h_n\to 0$, letting $n\to\infty$ gives us
\eqref{less-than-0}.
\end{proof}



%
\begin{proof}[Proof of Theorem \ref{solutionHJ} (vi)]

Let us fix $x\in X$ and $t\in \R_+$. Since $(x,t)\mapsto Q_tg(x)$
is a Lipschitz function, the limes inferior in \eqref{liminf}
is finite and we can choose a positive sequence $(h_n)$ such that
$h_n\to 0$ and
\begin{equation}
\label{realizing_infimum} \liminf_{s\to
0^{+}}\frac{Q_{t+s}g(x)-Q_tg(x)}{s} = \lim_{n\to
\infty}\frac{Q_{t+h_{n}}g(x)-Q_tg(x)}{h_n}.
\end{equation}

Next, applying the semigroup property we can write
\begin{equation}
\label{semihn} Q_{t+h_{n}}g(x)= \min_{y\in X }\Big\{
h_nL\bigg(\frac{d(x,y)}{h_n} \bigg)+Q_tg(y)\Big\}.
\end{equation}
For each $n$ we choose a point $y_n\in X$ for which the minimum is
attained. The superlinearity of $L$ implies that $y_n\to x$.

As $Q_tg(x)$ is decreasing in $t$, we have $Q_{t+h_{n}}g(x)\leq
Q_tg(x)$, and hence
\begin{equation}
\label{positive} Q_tg(y_n)\leq h_nL\bigg(\frac{d(x,y)}{h_n}
\bigg)+Q_tg(y_n) \leq Q_tg(x).
\end{equation}
Since $H(w) = \max _{v\in \R_+}\{ w v -L(v)\}$ we have $H(w)+L(v)
\geq wv$ for all $w,v\in \R_+$. Together with \eqref{positive}
this implies that
$$H\bigg(\frac{Q_tg(x)-Q_tg(y_n)}{d(x,y_n)}\bigg) + L\bigg(\frac{d(x,y_n)}{h_n}\bigg)\geq \frac{Q_tg(x)-Q_tg(y_n)}{h_n},$$
and we have
\begin{equation*}
L\bigg(\frac{d(x,y_n)}{h_n}\bigg) +
\frac{Q_tg(y_n)-Q_tg(x)}{h_n}\geq
-H\bigg(\frac{[Q_tg(x)-Q_tg(y_n)]_+}{d(x,y_n)}\bigg).
\end{equation*}
Together with \eqref{semihn} this implies
\begin{equation*}
\begin{split}
\frac{Q_{t+h_{n}}g(x)-Q_tg(x)}{h_n}& = \frac{1}{h_n} \bigg(h_n
L\bigg(\frac{d(x,y_n)}{h_n}\bigg) +Q_tg(y_n)-Q_tg(x)\bigg)
\\
& \geq  
-H\bigg(\frac{[Q_tg(x)-Q_tg(y_n)]_+}{d(x,y_n)}\bigg).
\end{split}
\end{equation*}
Letting now $n\to \infty$ and using \eqref{realizing_infimum} we
obtain
\begin{equation*}
\begin{split}
\liminf_{s\to 0^{+}}\frac{Q_{t+s}g(x)-Q_tg(x)}{s}
&\geq \limsup_{n\to\infty}\bigg(-
H\bigg(\frac{[Q_tg(x)-Q_tg(y_n)]_+}{d(x,y_n)}\bigg)\bigg) \\
&\geq -H\big(|\nabla^- Q_tg|(x)\big).
\end{split}
\end{equation*}
\end{proof}

Notice, that if $u(x,t)=Q_tg(x)$, and $t$ is a point of
differentiability of $t\to u(x,t)$ for a fixed $x$, then it
follows from (vi) that
\begin{equation*}
u_t(x,t)+ H\big(|\nabla^- u|(x,t)\big) \geq 0.
\end{equation*}
Since $u$ is Lipschitz--continuous, the above inequality holds for
all $x\in X$ and a.e. $t\in \R_+$. This finishes the proof of (v).


\begin{proof}[Proof of Theorem \ref{solutionHJ} (vii)]

We prove (vii) along the lines in~\cite[Thm 2.5 (vii)]{LoVi}. If
$|\nabla^- Q_tg|(x) =0$ the statement is trivial since $Q_tg(x)$
is non-increasing in $t$. Let $t>0$ be fixed and assume that $|\nabla^-
Q_tg|(x)> 0$. Define $f(x):= Q_tg(x)$ and fix a real number
$\alpha>0$. By the semi-group property (ii) we get for $s>0$
\begin{equation*}
\begin{split}
\frac{ Q_tg(x) - Q_{t+s}g(x) } s  & = \frac 1s \sup_{y\in X}
\left[f(x) -f(y) - s \, L\left( \frac {d(x,y)} s\right)  \right]
\\ &
\ge  \sup_{y\in S_{\alpha s }(x)}\left[ \frac
{f(x)-f(y)}{d(x,y)}\: \alpha- L(\alpha)\right].
\end{split}
\end{equation*}
Write $$\psi(r) = \sup_{y\in S_r(x)}\frac {f(x)  - f(y)}
{d(x,y)}.$$ It is shown in~\cite{LoVi} that $\liminf_{r\to 0^+}
\psi(r)= |\nabla^- f|(x)$ a.e. on $X$. Thus
\begin{equation*}
\liminf_{s\to 0^+}\frac{ Q_tg(x) - Q_{t+s}g(x) } s \ge |\nabla^-
Q_tg|(x)\;\alpha - L(\alpha).
\end{equation*}
Maximizing the above inequality over $\alpha > 0$ we obtain that
\begin{equation*}
\liminf_{s\to 0^+}\frac{ Q_tg(x) - Q_{t+s}g(x) } s \ge
H\left(|\nabla^- Q_tg|(x)\right),
\end{equation*}
which is equivalent to the statement of the proposition.
\end{proof}

Finally, (vi) and (vii) together prove (viii).

\section{\bf Applications, comments and questions}
A large class of geodesic metric measure spaces for which the Poincar\'e inequality holds --  and our results apply -- are the Carnot-Carath\'eodory geometries; see, for example, \cite{HeKo} and \cite{Gr}. A case of particular interest within this class is the class of Carnot groups where many fundamental results of Euclidean analysis hold.  In this setting, Hamilton--Jacobi equations have already been considered  by Manfredi and Stroffolini  \cite{MaSt}, see also  \cite{Dr}. It would be interesting to characterize measures  for which an appropriate Log-Sobolev inequality holds on Carnot-Carath\'eodory spaces. In the Euclidean setting results in this direction were obtained by Barthe and Kolesnikov  \cite{BaKo}. In the case of the first Heisenberg  group $\mathbb{H}$, Inglis and Papageorgiou showed in the recent paper ~\cite{InPa} that the measure 
$$ \mu_p (dx) = \frac{e^{-\beta d^p(x)}}{\int_{\mathbb{H}} e^{-\beta d^p(x)}dx}dx$$
satisfies the $q$-Log-Sobolev inequality. Here $\beta > 0$ is an arbitrary number, $p\geq 2$ is the conjugate exponent to $q$, $dx$ is the Lebesgue measure and $d(x)$ is the sub-Riemannian 
Carnot-Carath\'eodory distance on $\mathbb{H}$. In order to apply our to apply our results, 
one has to note that for smooth functions $f:\mathbb{H} \to \R$ the norm of the sub-Riemannian gradient $|\nabla f(x)|$ from \cite{InPa} and our 
metric subgradient $|\nabla^- f(x)|$ coincide for $\mu_p$ a.e. $x$ for which $|\nabla f(x)|>0$. For Lipschitz continuous functions this  follows from Pansu's differentiability theorem (\cite{Gr}). 

Therefore the $q$-Log-Sobolev inequality 
according to Definition \ref{logsob} holds in this setting. Applying our results one obtains the validity of the $p$-Talagrand inequality and hypercontractivity of the Hamilton-Jacobi semigroup in the setting of the Heisenberg group equipped with the sub-Riemannian metric and the above probability measure 
$\mu_p$. 

Furthermore, it would be interesting to see whether the results of this paper hold in the more general class of metric measure spaces satisfying the so--called Lip--lip condition. To be precise, let us recall from \cite{Ke2} that a metric measure space $(X, d, \mu)$ satisfies the Lip--lip condition if there exists a constant $L \geq 1$ with the property that if $f:X\to \R$ is a Lipschitz function then 
\begin{equation} \label{Lip-lip}
Lip f (x) \leq L \cdot lip f(x), \ \ \mbox{for} \ \mu\mbox{--a.e.} \ x\in X,  
\end{equation} 
where $Lip f(x)$ and $lip f (x)$ are the local Lipschitz numbers of $f$ at $x$ defined as
$$Lip f (x) = \limsup_{r\to 0} \sup_{y\in B(x,r)} \frac{|f(x)-f(y)|}{r} , $$
$$lip f (x) = \liminf_{r\to 0} \sup_{y\in B(x,r)} \frac{|f(x)-f(y)|}{r} . $$
Let us recall that Keith proved in \cite{Ke2} that if a metric measure space $(X,d,\mu)$, where $\mu$ is doubling, satisfies the Lip--lip condition then $X$ supports a measurable differentiable structure in the sense of Cheeger 
\cite{Che}. Keith also proved that if the doubling metric measure space $(X,d,\mu)$  satisfies the Poincar\'e inequality then the Lip--lip condition is satisfied. It is also clear that the Lip--lip condition is more general than the Poincar\'e inequality, for example a positive measure Cantor set in the Euclidean space satisfies this condition but does not support a Poincar\'e inequality. 

Finally, it would be interesting to prove a variant of Hopf-Lax formula for the solution of the Hamilton-Jacobi equation, i.e. Theorem \ref{solutionHJ} for the case of geodesic spaces satisfying the Lip-lip condition. It is clear that statements (i) through (vi) will hold true without modification. Furthermore, it is reasonable to expect that statement (vii) will be replaced by 

\begin{equation*}
\limsup_{s\to 0^+} \frac { Q_{t+s}g(x) - Q_tg(x) }s \le  - H\big(\frac{|
\nabla^- Q_tg|(x)}{L'}\big)
\end{equation*}
for all $t>0$ and $\mu$--a.e. $x\in X$ and for some absolute constant  $L'\geq 1$ depending on $(X,d,\mu)$.  

The statements  the other results of the paper concerning the circle of equivalences of Talagrand Log- Sobolev inequality and hypercontractivity would then follow (with possibly adjusted constants) along the same lines as in the case of metric spaces satisfying a Poincar\'e inequality.  
\medskip 

\noindent {\bf Acknowledgements} The authors are grateful to Cedric Villani for his valuable comments and observations concerning the manuscript.

\bibliographystyle{plain}

\end{document}